\newtheorem{theorem}{Theorem}[section]
\newtheorem{proposition}[theorem]{Proposition}
\numberwithin{equation}{section}
\date{\today}
\begin{document}

\title[Boundary determination of the metric from Cauchy data]{Boundary determination of the Riemannian metric from Cauchy data for the Stokes equations}

\author{Xiaoming Tan}
\address{Beijing International Center for Mathematical Research, Peking University, Beijing 100871, China}
\email{tanxm@pku.edu.cn}

\subjclass[2020]{53C21, 35R30, 58J32, 58J40, 76D07}

\keywords{Boundary determination; Riemannian metric; Stokes equations; Dirichlet-to-Neumann map; Cauchy data.\\
{\bf-----------------}\\
\hspace*{3mm} {\it Email address}: tanxm@pku.edu.cn\\
\hspace*{3mm} Beijing International Center for Mathematical Research, Peking University, Beijing 100871, China}

\begin{abstract}
    For a compact connected Riemannian manifold of dimension $n$ with smooth boundary, $n\geqslant 2$, we prove that the Cauchy data (or the Dirichlet-to-Neumann map) for the Stokes equations uniquely determines the partial derivatives of all orders of the metric on the boundary of the manifold.
\end{abstract}

\maketitle 

\section{Introduction}

\vspace{5mm}

\subsection{Stokes equations on the manifold}

Let $(M,g)$ be a compact connected Riemannian manifold of dimension $n$ with smooth boundary $\partial M$, $n\geqslant 2$. In this paper, we assume that $M$ is filled with an incompressible fluid. In the local coordinates $\{x_j\}_{j=1}^n$, we denote by $\bigl\{\frac{\partial}{\partial x_j}\bigr\}_{j=1}^n$ and $\{dx_j\}_{j=1}^n$, respectively, the natural basis for the tangent space $T_x M$ and the cotangent space $T_x^{*} M$ at the point $x\in M$. In what follows, we will use the Einstein summation convention. The Greek indices run from 1 to $n-1$, whereas the Roman indices run from 1 to $n$, unless otherwise specified. Then, the Riemannian metric $g$ is given by $g = g_{jk} \,dx_j\otimes dx_k$. Let $\nabla_j=\nabla_{\frac{\partial}{\partial x_j}}$ be the covariant derivative with respect to $\frac{\partial}{\partial x_j}$ and $\nabla^j= g^{jk} \nabla_k$, where $[g^{jk}]=[g_{jk}]^{-1}$. 

Let the smooth vector field $\bm{u}=u^j\frac{\partial}{\partial x_j}$ be the velocity of the fluid. The strain tensor $S$ is defined by (see \cite[p.\,562]{Taylor11.3})
\begin{align}\label{1.1}
    (S\bm{u})^j_k := \nabla^j u_k + \nabla_k u^j,
\end{align}
where $u_k=g_{kl}u^l$, or equivalently, $(S\bm{u})^{jk} := \nabla^j u^k + \nabla^k u^j$. The stress tensor $\sigma$ is given by
\begin{align}\label{1.2}
    \sigma(\bm{u},p):=\mu S\bm{u}-pg,
\end{align}
where $\mu,p\in C^{\infty}(M)$ denote the viscosity and the pressure, respectively. Physically, the case of $\mu=0$ is observed only in superfluids that have the ability to self-propel and travel in a way that defies the forces of gravity and surface tension. Otherwise the viscosities of all fluids are positive. Thus, we can assume that $\mu>0$ in $M$. A fluid with nonconstant viscosity is called a non-Newtonian fluid, and these are relatively common, including things such as blood, shampoo and custard (see \cite{LaiUhlmWang15}). The famous stationary Stokes equations on the manifold read
\begin{align}\label{1.3}
    \begin{cases}
        \operatorname{div}\sigma(\bm{u},p)=0 &\text{in}\ M,\\
        \operatorname{div}\bm{u}=0 \quad & \text{in}\ M,
    \end{cases}
\end{align}
where $\operatorname{div}$ denotes the divergence operator on the manifold.

\subsection{Dirichlet-to-Neumann map}

Let $\bm{h}\in [H^{3/2}(\partial M)]^n$ satisfy the compatibility condition
\begin{align*}
    \int_{\partial M} g(\bm{h},\nu) \,dS=0,
\end{align*}
where $\nu$ is the unit outer normal to $\partial M$. This condition leads to the uniqueness of \eqref{1.3} (see \cite{HeckLiWang07,LaiUhlmWang15}), that is, there exists a unique solution $(\bm{u},p)\in [H^2(M)]^n\times H^1(M)$ ($p$ is unique up to a constant) of the Dirichlet problem
\begin{align}\label{1.4}
    \begin{cases}
        \operatorname{div}\sigma(\bm{u},p)=0 &\text{in}\ M,\\
        \operatorname{div}\bm{u}=0 \quad & \text{in}\ M,\\
        \bm{u}=\bm{h} \quad & \text{on}\ \partial M.
    \end{cases}
\end{align}
We could define the Cauchy data for the Stokes equations by 
\begin{align}
    C_g:=\{(\bm{u},\sigma(\bm{u},p)\nu)|_{\partial M}:(\bm{u},p)\ \text{satisfies \eqref{1.4}}\}.
\end{align}
The physical sense of $\sigma(\bm{u},p)\nu|_{\partial M}$ is the stress acting on $\partial M$ and is called the Cauchy force (see \cite{HeckLiWang07,LaiUhlmWang15}). We also call $\sigma(\bm{u},p)\nu|_{\partial M}$ the Neumann boundary condition for \eqref{1.4}. Thus, we can define the Dirichlet-to-Neumann map $\Lambda_g:[H^{3/2}(\partial M)]^n\to [H^{1/2}(\partial M)]^n$ associated with \eqref{1.4} by 
\begin{align}
    \Lambda_g(\bm{h}):=\sigma(\bm{u},p)\nu \quad \text{on}\ \partial M.
\end{align}
It is clear that the Dirichlet-to-Neumann map $\Lambda_{g}$ is an elliptic, self-adjoint pseudodifferential operator of order one defined on the boundary $\partial M$. An interesting question is whether the Cauchy data $C_g$ (or the Dirichlet-to-Neumann map $\Lambda_g$) for the Stokes equations uniquely determines the geometry of the boundary of the manifold.

\vspace{2mm}

The main result of this paper is the following theorem.

\begin{theorem}\label{thm1.1}
    Let $(M,g)$ be a compact connected Riemannian manifold of dimension $n$ with smooth boundary $\partial M$, $n\geqslant 2$. Then, the Cauchy data $C_g$ (or the Dirichlet-to-Neumann map $\Lambda_g$) for the Stokes equations uniquely determines the partial derivatives of all orders of the metric $\frac{\partial^{|J|} g^{\alpha\beta}}{\partial x^J}$ on $\partial M$ for all multi-indices $J$.
\end{theorem}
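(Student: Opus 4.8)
The plan is to adapt the boundary-determination strategy pioneered by Lee–Uhlmann for the conductivity/Laplace–Beltrami setting and extended to elasticity by Nakamura–Tanuma, computing the full symbol of the Dirichlet-to-Neumann map $\Lambda_g$ in boundary normal coordinates. First I would fix a point $x_0\in\partial M$ and introduce \emph{boundary normal (semigeodesic) coordinates}, in which the metric takes the form $g_{\alpha n}=0$, $g_{nn}=1$, with $x_n$ the distance to the boundary, so that the inward normal is $\partial/\partial x_n$. In these coordinates the tangential variables $x'=(x_1,\dots,x_{n-1})$ parametrize $\partial M$ near $x_0$, and I would write the Stokes system \eqref{1.3} together with the divergence-free and stress operators explicitly in terms of $g_{\alpha\beta}$, its derivatives, the viscosity $\mu$, and the pressure $p$. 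The aim is to recover $g^{\alpha\beta}(x_0)$ from the principal symbol of $\Lambda_g$ and then the normal derivatives $\partial_{x_n}^{|J|}g^{\alpha\beta}$ inductively from successively lower-order terms in the symbol expansion.

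The key technical step is to construct a \emph{factorization} of the Stokes operator near the boundary. Writing the system as a first-order ODE in the normal variable $x_n$ with $x'$-pseudodifferential coefficients, I would seek operators so that the full interior operator factors (modulo smoothing) as a product of an ``outgoing'' and an ``incoming'' first-order system; the Dirichlet-to-Neumann map is then read off from the boundary symbol of the decaying factor. A genuine complication specific to the Stokes system, absent in the scalar or even the elasticity case, is the presence of the pressure $p$ and the divergence constraint $\operatorname{div}\bm{u}=0$: the system is not elliptic in the usual sense but is an elliptic system with a constraint (a saddle-point / ADN-elliptic structure). I would handle this by eliminating $p$ using the normal component of the momentum equation, thereby expressing the normal derivative of the pressure and of the normal velocity in terms of the tangential data, and reducing \eqref{1.4} to a determined elliptic system for $\bm{u}$ to which the factorization applies. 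One must track carefully how the symbol $|\xi'|_g$ of the tangential Laplacian enters, where $\xi'$ is the cotangent (co)variable dual to $x'$.

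With the factorization in hand, I would expand the full symbol $\sigma(\Lambda_g)\sim\sum_{k\le 1}q_k(x',\xi')$ into homogeneous components and match orders. The principal symbol $q_1$ should be a matrix built from $\mu$ and the tangential metric $g^{\alpha\beta}(x_0)$ contracted with $\xi'$; self-adjointness and ellipticity of $\Lambda_g$, noted in the excerpt, guarantee $q_1$ is invertible for $\xi'\neq 0$, and from its dependence on $\xi'$ one reads off $g^{\alpha\beta}(x_0)$ (the tangential metric at the boundary point), together with $\mu$. The tangential derivatives $\partial_{x'}^{J}g^{\alpha\beta}$ at $x_0$ come for free once $g^{\alpha\beta}$ is known along $\partial M$ as a function of $x'$. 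The substantive recursion is on the \emph{normal} derivatives: the lower-order symbol $q_{1-m}$ is an explicit expression in $g^{\alpha\beta}$ and its $x_n$-derivatives up to order $m$ at $x_n=0$, plus terms involving already-determined lower-order normal derivatives. Solving this linear relation for $\partial_{x_n}^{m}g^{\alpha\beta}|_{x_n=0}$ and proceeding by induction on $m$ recovers all normal derivatives, and hence all $\partial^{|J|}g^{\alpha\beta}/\partial x^J$ on $\partial M$.

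I expect the \textbf{main obstacle} to be the correct algebraic bookkeeping of the pressure elimination and the constraint within the symbol calculus: one must verify that the reduced operator for $\bm{u}$ remains elliptic after eliminating $p$, that the factorization is unaffected by the (lower-order) coupling introduced by $\operatorname{div}\bm{u}=0$, and that at each order the linear map sending the unknown $\partial_{x_n}^{m}g^{\alpha\beta}|_{x_n=0}$ to the corresponding symbol coefficient $q_{1-m}$ is invertible, so that the inductive step is well-posed. Checking invertibility of this map at \emph{every} order—rather than just the principal one—is where the essential work lies, and the factorization must be organized so that this invertibility is transparent.
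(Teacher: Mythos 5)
Your overall strategy---boundary normal coordinates, factorization of the operator into incoming/outgoing first-order factors modulo smoothing, expansion of the full symbol into homogeneous terms, and induction on the order of the normal derivative of $g^{\alpha\beta}$---is exactly the framework the paper uses. The genuine divergence, and the place where your proposal has a gap, is the treatment of the pressure and the incompressibility constraint. You propose to eliminate $p$ via the normal component of the momentum equation and reduce to a determined elliptic system for $\bm{u}$ alone. The paper instead makes the substitution $\bm{u}=\mu^{-1/2}\bm{w}+\mu^{-1}\nabla f-f\nabla\mu^{-1}$ together with the choice $p=\operatorname{div}(\mu^{1/2}\bm{w})+2\Delta_g f$, which converts the constrained $n$-component Stokes system into an unconstrained $(n+1)\times(n+1)$ second-order system $L_g\bm{U}=0$ for $\bm{U}=(\bm{w},f)^T$ whose principal part is a \emph{scalar} Laplacian times $I_{n+1}$, with Cauchy data equivalent to the original one. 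This is not a cosmetic difference: because the principal symbol is scalar, the leading factor is $q_1=|\xi'|I_{n+1}$ and the recursion $q_1q_{-m-1}+q_{-m-1}q_1=E_{-m}$ is solved by simple division by $2|\xi'|$, after which taking the matrix trace isolates the combination $k^{\alpha\beta}_{m}=c\,h_m\,g^{\alpha\beta}-(n+1)\partial_{x_n}^{m}g^{\alpha\beta}$ with $h_m=g_{\alpha\beta}\partial_{x_n}^{m}g^{\alpha\beta}$; contracting with $g_{\alpha\beta}$ gives coefficients $n^2+n-4$ and $n^2+3n-6$, positive for $n\geqslant 2$, which is precisely the ``invertibility at every order'' that you correctly identify as the crux but leave unverified.

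Two concrete difficulties with your version of the reduction. First, the Cauchy force $\sigma(\bm{u},p)\nu=\mu S\bm{u}\,\nu-p\nu$ contains the boundary value of $p$ itself, not merely $\partial_n p$, so expressing $\Lambda_g$ as a pseudodifferential operator in $\bm{h}$ still requires solving for $p$ in terms of $\bm{h}$; the normal momentum equation gives you $\partial_n p$, not $p|_{\partial M}$, and closing this loop reintroduces the saddle-point structure you are trying to avoid. Second, even granting a determined reduced system for $\bm{u}$, its principal symbol will carry the Stokes projection structure rather than being a multiple of the identity, so the equations for the lower-order symbols become Sylvester-type matrix equations; they are solvable since the principal symbol is positive definite, but the explicit dependence of each $q_{1-m}$ on $\partial_{x_n}^{m}g^{\alpha\beta}$ is then far harder to disentangle, and your claimed invertibility of the map from $\partial_{x_n}^{m}g^{\alpha\beta}|_{x_n=0}$ to the symbol coefficient is no longer transparent. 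The paper's potential substitution is the missing idea that makes both problems disappear at once; without it (or an equivalently effective device), your plan as written does not yet constitute a proof.
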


The Dirichlet-to-Neumann maps have been studied for decades. In \cite{LeeUhlm89}, the authors proved that the Dirichlet-to-Neumann map uniquely determines the real-analytic Riemannian metric. In \cite{LassUhlm01}, the authors studied the inverse problem of determining a Riemannian manifold from the boundary data of harmonic functions, this extend the results in \cite{LeeUhlm89}. Moreover, \cite{LassTaylUhlm03} considered the case of complete Riemannian manifold. In \cite{Liu19,Liu19.2}, the author proved that the elastic Dirichlet-to-Neumann map with constant coefficients and the electromagnetic Dirichlet-to-Neumann map can uniquely determines the real-analytic Riemannian metric and parameters. In \cite{LiuTan23}, the authors computed the full symbol of the magnetic Dirichlet-to-Neumann map and calculated all the coefficients of the heat trace asymptotic expansion associated with the magnetic Steklov problem. In \cite{TanLiu23}, the authors gave an explicit expression for the full symbol of the elastic Dirichlet-to-Neumann map with variable coefficients and proved that the elastic Dirichlet-to-Neumann map uniquely determines the Lam\'{e} coefficients. In \cite{Tan23}, the author proved that the thermoelastic Dirichlet-to-Neumann map uniquely determines partial derivatives of
all orders of thermoelastic coefficients on the boundary of the manifold. We refer the reader to \cite{Uhlm14,Uhlm09} and the references therein for more topics about the Dirichlet-to-Neumann maps.

This paper is organized as follows. In Section \ref{s2}, we derive a new system associated with the Stokes equations. In Section \ref{s3}, we give the symbols of some pseudodifferential operators. In Section \ref{s4}, we prove the main result by the full symbol of the new Dirichlet-to-Neumann map.

\vspace{5mm}

\section{A new system associated with the Stokes equations}\label{s2}

\vspace{5mm}

In this section we will derive a new system associated with the Stokes equations \eqref{1.3}. Inspired by \cite{HeckLiWang07,Liu20}, we set 
\begin{align}\label{a1}
    \bm{u}=\mu^{-1/2}\bm{w}+\mu^{-1}\nabla f-f\nabla \mu^{-1}.
\end{align}
Then,
\begin{align}\label{a2}
    \operatorname{div}\bm{u}=\mu^{-1/2}\operatorname{div}\bm{w}+ g(\nabla\mu^{-1/2},\bm{w}) +\mu^{-1}\Delta_g f - f \Delta_g \mu^{-1}.
\end{align}
The $j$th component of $\operatorname{div}(\mu S\bm{u})$ is
\begin{align*}
    \operatorname{div}(\mu S\bm{u})^j
    &=\nabla^k\big(\mu (S\bm{u})^j_k\big)\\
    &=\nabla^k\big[-(\nabla^j\mu^{1/2})w_k-(\nabla_k\mu^{1/2})w^j+\mu^{1/2}(\nabla^j w_k+\nabla_k w^j)+2\nabla^j\nabla_k f\\
    &\quad -2\mu(\nabla^j\nabla_k \mu^{-1})f  \big]\\
    &=-(\nabla^k\nabla^j\mu^{1/2})w_k-(\nabla^j\mu^{1/2})\nabla^k w_k-(\nabla^k\nabla_k\mu^{1/2})w^j-(\nabla_k\mu^{1/2})\nabla^k w^j \\
    &\quad +(\nabla^k\mu^{1/2})(\nabla^j w_k+\nabla_k w^j)+\mu^{1/2}(\nabla^k\nabla^j w_k+\nabla^k\nabla_k w^j) +2\nabla^k\nabla^j\nabla_k f \\
    &\quad -2\nabla^k(\mu\nabla^j\nabla_k \mu^{-1})f-2\mu(\nabla^j\nabla_k \mu^{-1})\nabla^k f.
\end{align*}
Note that
\begin{align*}
    \nabla^k\nabla^j w_k
    &=g^{jl}\nabla_k\nabla_l w^k\\
    &=g^{jl}(\nabla_l\nabla_kw^k+R^k_{klm}w^m)\\
    &=g^{jl}(\nabla_l\nabla_kw^k+R_{lm}w^m)\\
    &=\nabla^j\operatorname{div}\bm{w}+\operatorname{Ric}(\bm{w})^j.
\end{align*}
Here $\operatorname{Ric}(\bm{w})^j = g^{jk}R_{kl} w^l$, where $R_{kl}$ are the components of Ricci tensor of the manifold, in local coordinates,
\begin{align}\label{0.03}
    R_{kl}=\frac{\partial \Gamma^{j}_{kl}}{\partial x_j} - \frac{\partial \Gamma^{j}_{jl}}{\partial x_k} + \Gamma^{j}_{jm} \Gamma^{m}_{kl} - \Gamma^{j}_{km} \Gamma^{m}_{jl},
\end{align}
where the Christoffel symbols
\begin{align*}
    \Gamma^{j}_{kl} = \frac{1}{2} g^{jm} \Bigl(\frac{\partial g_{km}}{\partial x_l} + \frac{\partial g_{lm}}{\partial x_k} - \frac{\partial g_{kl}}{\partial x_m}\Bigr).
\end{align*}

Similarly,
\begin{align*}
    \nabla^k\nabla^j\nabla_k f= \nabla^j\Delta_g f+\operatorname{Ric}(\nabla f)^j.
\end{align*}
Hence,
\begin{align*}
    \operatorname{div}(\mu S\bm{u})^j
    &=\nabla^j((\nabla^k\mu^{1/2})w_k)-2(\nabla^k\nabla^j\mu^{1/2})w_k+\nabla^j(\mu^{1/2}\operatorname{div}\bm{w})-2(\nabla^j\mu^{1/2})\operatorname{div}\bm{w} \\
    &\quad -(\Delta_g\mu^{1/2})w^j+\mu^{1/2}((\Delta^{}_{B}\bm{w})^j+\operatorname{Ric}(\bm{w})^j)+2(\nabla^j\Delta_g f+\operatorname{Ric}(\nabla f)^j) \\
    &\quad -2\nabla^k(\mu\nabla^j\nabla_k\mu^{-1})f-2\mu(\nabla^j\nabla_k\mu^{-1})\nabla^k f,
\end{align*}
where the Bochner Laplacian is given by $(\Delta^{}_{B}\bm{w})^j := \nabla^k \nabla_k w^j$. Let
\begin{align*}
    p=\operatorname{div}(\mu^{1/2}\bm{w})+2\Delta_g f.
\end{align*}
Then we have
\begin{align*}
    \operatorname{div}(\mu S\bm{u})^j
    &=\mu^{1/2}((\Delta^{}_{B}\bm{w})^j+\operatorname{Ric}(\bm{w})^j)+\nabla^j p -2(\nabla^j\mu^{1/2})\operatorname{div}\bm{w}-2\nabla^k(\mu\nabla^j\nabla_k\mu^{-1})f \\
    &\quad -2\mu(\nabla^j\nabla_k\mu^{-1})\nabla^k f-2(\nabla^k\nabla^j\mu^{1/2})w_k-(\Delta_g\mu^{1/2})w^j+2\operatorname{Ric}(\nabla f)^j.
\end{align*}
Therefore,
\begin{align*}
    (\operatorname{div}\sigma)^j
    &=\operatorname{div}(\mu S\bm{u})^j-\nabla^j p\\
    &=\mu^{1/2}((\Delta^{}_{B}\bm{w})^j+\operatorname{Ric}(\bm{w})^j) -2(\nabla^j\mu^{1/2})\operatorname{div}\bm{w}-2\nabla^k(\mu\nabla^j\nabla_k\mu^{-1})f \\
    &\quad -2\mu(\nabla^j\nabla_k\mu^{-1})\nabla^k f-2(\nabla^k\nabla^j\mu^{1/2})w_k-(\Delta_g\mu^{1/2})w^j+2\operatorname{Ric}(\nabla f)^j.
\end{align*}
It follows from \cite{TanLiu23} that the Bochner Laplacian can be written as
\begin{align*}
    (\Delta^{}_{B}\bm{w})^j=\Delta_g w^j - \operatorname{Ric}(\bm{w})^j + g^{kl} \Bigl( 2\Gamma^j_{mk} \frac{\partial w^m}{\partial x_l} + \frac{\partial \Gamma^j_{kl}}{\partial x_m} w^m \Bigr),
\end{align*}
and the divergence operator has the local expression
\begin{align*}
    \operatorname{div}\bm{w} = \frac{\partial w^k}{\partial x_k} + \Gamma^k_{kl} w^l.
\end{align*}
Hence, we get
\begin{align}\label{b3}
    (\operatorname{div}\sigma)^j
    &=\mu^{1/2}\biggl(\Delta_g w^j + g^{kl} \Bigl( 2\Gamma^j_{mk} \frac{\partial w^m}{\partial x_l} + \frac{\partial \Gamma^j_{kl}}{\partial x_m} w^m \Bigr)\biggr)  \notag\\
    &\quad -2(\nabla^j\mu^{1/2})\Bigl(\frac{\partial w^k}{\partial x_k} + \Gamma^k_{kl} w^l\Bigr)-2\nabla^k(\mu\nabla^j\nabla_k\mu^{-1})f \notag\\
    &\quad -2\mu(\nabla^j\nabla_k\mu^{-1})g^{kl}\frac{\partial f}{\partial x_l} -2(\nabla^k\nabla^j\mu^{1/2})w_k-(\Delta_g\mu^{1/2})w^j+2R^{jk}\frac{\partial f}{\partial x_k}.
\end{align}

In boundary normal coordinates, the metric has the form (see \cite{LeeUhlm89,TanLiu23,LiuTan23,Tan23})
\begin{align*}
    g = g_{\alpha\beta} \,dx_{\alpha} \,dx_{\beta} + dx_{n}^{2}.
\end{align*}
Note that in this coordinates, in a neighborhood of the origin, we have
\begin{align*}
    g_{\alpha n}&=g^{\alpha n}=0,\\
    \Gamma^{n}_{nk}&=\Gamma^{k}_{nn}=0.
\end{align*}
Then, in boundary normal coordinates, we write the Laplace--Beltrami operator as
\begin{align}\label{b2}
    \Delta_g
    & = \frac{\partial^2 }{\partial x_n^2} + \Gamma^\alpha_{\alpha n} \frac{\partial }{\partial x_n} + g^{\alpha\beta} \frac{\partial^2}{\partial x_\alpha\partial x_\beta} +
    \Bigl(
        g^{\alpha\beta} \Gamma^\gamma_{\gamma\alpha} + \frac{\partial g^{\alpha\beta}}{\partial x_\alpha}
    \Bigr)
    \frac{\partial }{\partial x_\beta}.
\end{align}

In what follows, for the sake of simplicity, we denote by $R^{jk}=g^{jl}g^{km}R_{lm}$, $I_n$ the $n\times n$ identity matrix, and
\begin{align*}
    \begin{bmatrix}
        [a^j_k] & [b^j] \\[2mm]
        [c_k] & d
    \end{bmatrix}
    :=
    \begin{bmatrix}
        \begin{BMAT}{ccc.c}{ccc.c}
        a^1_1 & \dots & a^1_n\ &\ b^1 \\
        \vdots & \ddots & \vdots\ &\ \vdots \\
        a^n_1 & \dots & a^n_n\ &\ b^n \\
        c_1 & \dots & c_n\ &\ d
    \end{BMAT}
    \end{bmatrix}.
\end{align*}

Let $\bm{U}=(\bm{w},f)^T$. Combining \eqref{1.3}, \eqref{a2}, \eqref{b3}, and \eqref{b2}, we obtain, in boundary normal coordinates,
\begin{align}\label{b8}
    L_g \bm{U} = 0.
\end{align}
Here the operator $L_g$ is given by the following equality
\begin{align}\label{3.07}
    A^{-1} L_g = I_{n+1}\frac{\partial^2 }{\partial x_n^2} + B \frac{\partial }{\partial x_n} + C,
\end{align}
where
\begin{align}\label{3.2}
        A=
        \begin{bmatrix}
            \mu^{1/2} I_{n} &0  \\
            0& \mu^{-1} 
        \end{bmatrix},
\end{align}
\begin{align*}
    &B=\Gamma^\alpha_{\alpha n}I_{n+1}+
    \begin{bmatrix}
        2[\Gamma^j_{kn}]-2\mu^{-1/2}[(\nabla^j\mu^{1/2})\delta_{nk}] & 2\mu^{-1/2}[R^{jn}-\mu\nabla^j\nabla^n\mu^{-1}]\\
        \mu^{1/2}[\delta_{nk}] & 0 
    \end{bmatrix},\\
    &C=C_2+C_1+C_0,\\
    &C_2=\Big(g^{\alpha\beta}\frac{\partial^2 }{\partial x_\alpha \partial x_\beta}\Big)I_{n+1},\\
    &C_1=\bigg(\Big(g^{\alpha\beta}\Gamma^\gamma_{\alpha\gamma}+\frac{\partial g^{\alpha\beta}}{\partial x_{\alpha}}\Big) \frac{\partial }{\partial x_{\beta}}\bigg) I_{n+1}\\
    &\qquad +\begin{bmatrix}
        \displaystyle 2\Big[g^{\alpha\beta}\Gamma^j_{k\alpha}\frac{\partial}{\partial x_{\beta}}\Big]  & \displaystyle 2\mu^{-1/2}\Bigl[(R^{j\alpha}-\mu\nabla^j\nabla^\alpha\mu^{-1})\frac{\partial}{\partial x_{\alpha}}\Bigr] \\
        \displaystyle \mu^{1/2}\Bigl[\frac{\partial}{\partial x_{k}}-\delta_{nk}\frac{\partial}{\partial x_{n}}\Bigr] &  0
    \end{bmatrix}\\
    &\qquad +
    \begin{bmatrix}
        \displaystyle -2\mu^{-1/2}\Big[(\nabla^j\mu^{1/2})\Bigl(\frac{\partial}{\partial x_{k}}-\delta_{nk}\frac{\partial}{\partial x_{n}}\Bigr)\Big]  & 0 \\
        0 &  0
    \end{bmatrix},\\
    &C_0 =
    \begin{bmatrix}
        \displaystyle (-\mu^{-1/2}\Delta_g \mu^{1/2})I_n + \bigg[g^{ml}\frac{\partial \Gamma^j_{ml}}{\partial x_k}\bigg]  &  \displaystyle -2\mu^{-1/2}[\nabla^k(\mu\nabla^j\nabla_k\mu^{-1})] \\[4mm]
        \displaystyle \mu^{1/2}[\Gamma^l_{lk}]+\mu[\nabla_k\mu^{-1/2}] & -\mu\Delta_g\mu^{-1}
    \end{bmatrix}\\
    &\qquad +
    \begin{bmatrix}
        \displaystyle - 2\mu^{-1/2}[(\nabla^j\mu^{1/2})\Gamma^l_{lk}] - 2\mu^{-1/2}[\nabla^j\nabla_k\mu^{1/2}] &  \displaystyle 0 \\[4mm]
        \displaystyle 0 &  0
    \end{bmatrix}.
\end{align*}

We define the Dirichlet-to-Neumann map $\tilde{\Lambda}_g:[H^{3/2}(\partial M)]^{n+1}\to [H^{1/2}(\partial M)]^{n+1}$ associated with the following Dirichlet problem
\begin{align}\label{b9}
    \begin{cases}
        L_g\bm{U}=0 & \text{in}\ M,\\
        \bm{U}=\bm{V} \quad & \text{on}\ \partial M
    \end{cases}
\end{align}
by 
\begin{align}
    \tilde{\Lambda}_g(\bm{V}):=\frac{\partial \bm{U}}{\partial \nu} \quad \text{on}\ \partial M.
\end{align}
The corresponding Cauchy data is given by
\begin{align}
    \tilde{C}_g:=\Big\{\Big(\bm{U},\frac{\partial \bm{U}}{\partial \nu}\Big)\Big|_{\partial M}:\bm{U}\ \text{satisfies \eqref{b9}}\Big\}.
\end{align}
It is clear that the Cauchy data $\tilde{C}_g$ corresponding to the Dirichlet-to-Neumann map $\tilde{\Lambda}_g$ is equivalent to the Cauchy data $C_g$ corresponding to the Dirichlet-to-Neumann map $\Lambda_g$.

\vspace{5mm}

\section{Symbols of the pseudodifferential operators}\label{s3}

\vspace{5mm}

We denote by $i=\sqrt{-1}$, $\xi^{\prime}=(\xi_1,\dots,\xi_{n-1})$, $\xi^\alpha=g^{\alpha\beta}\xi_\beta$, $|\xi^{\prime}|=\sqrt{\xi^\alpha\xi_\alpha}$. Let $b(x,\xi^{\prime})$ and
    \begin{align*}
        c(x,\xi^{\prime}) = c_2(x,\xi^{\prime}) + c_1(x,\xi^{\prime}) + c_0(x,\xi^{\prime})
    \end{align*}
    be the full symbols of $B$ and $C$, respectively, where $c_j(x,\xi^{\prime})$ are homogeneous of degree $j$ in $\xi^{\prime}$. Thus, we obtain
    \begin{align}
        &\label{5.10} b(x,\xi^{\prime})=B,\\
        &\label{5.11} c_2(x,\xi^{\prime})= -|\xi^{\prime}|^2 I_{n+1},\\
        &\label{5.12} c_1(x,\xi^{\prime})= i\Big(\xi^\alpha\Gamma^\beta_{\alpha\beta}+\frac{\partial \xi^\alpha}{\partial x_{\alpha}}\Big) I_{n+1} \notag\\
        &\qquad+ i
        \begin{bmatrix}
            \displaystyle 2\big[g^{\alpha\beta}\Gamma^j_{k\alpha}\xi_{\beta}\big]  & \displaystyle 2\mu^{-1/2}\bigl[(R^{j\alpha}-\mu\nabla^j\nabla^\alpha\mu^{-1})\xi_{\alpha}\bigr] \notag\\
            \displaystyle \mu^{1/2}\bigl[\xi_{k}-\delta_{nk}\xi_{n}\bigr] &  0
        \end{bmatrix}\\
        &\qquad + i
        \begin{bmatrix}
            \displaystyle -2\mu^{-1/2}\big[(\nabla^j\mu^{1/2})(\xi_{k}-\delta_{nk}\xi_{n})\big]  & 0 \\
            0 &  0
        \end{bmatrix},\\
        &\label{5.13} c_0(x,\xi^{\prime}) =C_0.
    \end{align} 

For the convenience of stating the following proposition, we define 
\begin{align}
    \label{2.11} E_1&:=i\sum_\alpha\frac{\partial q_1}{\partial \xi_\alpha}\frac{\partial q_1}{\partial x_\alpha}+bq_1+\frac{\partial q_1}{\partial x_n} - c_1,\\
    \label{3.05} E_0&:=i\sum_\alpha\Bigl(\frac{\partial q_1}{\partial \xi_\alpha}\frac{\partial q_0}{\partial x_\alpha}+\frac{\partial q_0}{\partial \xi_\alpha}\frac{\partial q_1}{\partial x_\alpha}\Bigr)+\frac{1}{2}\sum_{\alpha,\beta}\frac{\partial^2q_1}{\partial \xi_\alpha \partial\xi_\beta}\frac{\partial^2q_1}{\partial x_\alpha \partial x_\beta} \notag\\
    &\quad -q_0^2 +bq_0 +\frac{\partial q_0}{\partial x_n} - c_0,\\
    \label{4.1} E_{-m}&:= bq_{-m}+\frac{\partial q_{-m}}{\partial x_n} - \sum_{\substack{-m \leqslant j,k \leqslant 1 \\ |J| = j + k + m}} \frac{(-i)^{|J|}}{J !} \partial_{\xi^{\prime}}^{J} q_j\, \partial_{x^\prime}^{J} q_k,\quad m \geqslant 1,
\end{align}
where $q_j=q_j(x,\xi^{\prime})$, $b=b(x,\xi^{\prime})$, and $c_j=c_j(x,\xi^{\prime})$.

\begin{proposition}\label{prop3.1}
    Let $Q(x,\partial_{x^\prime})$ be a pseudodifferential operator of order one in $x^\prime$ depending smoothly on $x_n$ such that
    \begin{align*}
        A^{-1}L_g
        = \Bigl(I_{n+1}\frac{\partial }{\partial x_n} + B - Q\Bigr)\Bigl(I_{n+1}\frac{\partial }{\partial x_n} + Q\Bigr)
    \end{align*}
    modulo a smoothing operator. Let $q(x,\xi^{\prime}) \sim \sum_{j\leqslant 1} q_j(x,\xi^{\prime})$ be the full symbol of $Q$, where $q_j(x,\xi^{\prime})$ are homogeneous of degree $j$ in $\xi^{\prime}$. Then, in boundary normal coordinates,
    \begin{align}
        q_1(x,\xi^{\prime})&=|\xi^{\prime}|I_{n+1}, \label{3.9}\\
        q_{-m-1}(x,\xi^{\prime})&=\frac{1}{2|\xi^{\prime}|}E_{-m}, \quad m\geqslant -1, \label{3.1.1}
    \end{align}
    where $E_{-m}\,(m\geqslant -1)$ are given by \eqref{2.11}--\eqref{4.1}.
\end{proposition}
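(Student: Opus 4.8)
The plan is to turn the operator factorization into a Riccati-type equation at the level of full symbols and then solve it recursively by matching homogeneity degrees in $\xi'$. First I would expand the operator identity in the statement. Writing $\partial_n=\frac{\partial}{\partial x_n}$ and using the product rule in $x_n$ (recall that $Q$ depends on $x_n$), the composition
\[
    \Bigl(I_{n+1}\partial_n + B - Q\Bigr)\Bigl(I_{n+1}\partial_n + Q\Bigr)
    = I_{n+1}\partial_n^2 + B\partial_n + \Bigl(\tfrac{\partial Q}{\partial x_n} + BQ - Q^2\Bigr),
\]
where $\frac{\partial Q}{\partial x_n}$ is the operator whose symbol is $\frac{\partial q}{\partial x_n}$. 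Comparing with \eqref{3.07} forces the operator identity $\frac{\partial Q}{\partial x_n}+BQ-Q^2=C$ modulo a smoothing operator. Passing to full symbols, using that $B$ is a multiplication operator so that $BQ$ has symbol $bq$ with $b=B$ by \eqref{5.10}, and applying the standard composition formula (the full symbol of a product $P_1P_2$ is $\sim\sum_J\frac{(-i)^{|J|}}{J!}(\partial_{\xi'}^J p_1)(\partial_{x'}^J p_2)$), this becomes
\[
    bq + \frac{\partial q}{\partial x_n} - \sum_J \frac{(-i)^{|J|}}{J!}\,\partial_{\xi'}^J q\,\partial_{x'}^J q = c.
\]

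Next I would substitute the asymptotic expansion $q\sim\sum_{j\leqslant 1}q_j$ together with $c=c_2+c_1+c_0$ from \eqref{5.11}--\eqref{5.13} and collect terms homogeneous of the same degree in $\xi'$. Since $\partial_{\xi'}^J q_j\,\partial_{x'}^J q_k$ is homogeneous of degree $j+k-|J|$, the top degree (degree $2$) receives a contribution only from the $|J|=0$, $j=k=1$ term $-q_1^2$, which must match $c_2=-|\xi'|^2 I_{n+1}$. This gives $q_1^2=|\xi'|^2 I_{n+1}$, and choosing the elliptic (positive) square root yields $q_1=|\xi'|I_{n+1}$, which is \eqref{3.9}.

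The recursion is then driven by the crucial fact that $q_1$ is a scalar multiple of $I_{n+1}$. At homogeneity degree $-m$ (with $m\geqslant -1$, where $c_{-m}=0$ once $m\geqslant 1$), the only place the newest unknown $q_{-m-1}$ can appear is in the $|J|=0$ composition term paired with $q_1$, namely $q_1 q_{-m-1}+q_{-m-1}q_1=2|\xi'|\,q_{-m-1}$; every other product involves symbols $q_j$ with $j\geqslant -m$ that are already determined, and these are exactly the terms collected in the restricted sum with $-m\leqslant j,k\leqslant 1$ and $|J|=j+k+m$. Isolating $q_{-m-1}$ and moving everything else to the right-hand side produces precisely $2|\xi'|\,q_{-m-1}=E_{-m}$, with $E_1$, $E_0$, and $E_{-m}$ $(m\geqslant 1)$ being the degree $1$, $0$, and $-m$ remainders recorded in \eqref{2.11}--\eqref{4.1}; dividing by $2|\xi'|$ gives \eqref{3.1.1}. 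I would verify the cases $m=-1$ and $m=0$ by hand to pin down $q_0$ and $q_{-1}$, since these fix the $|J|\leqslant 2$ cross terms (in particular the $\frac{1}{2}\sum_{\alpha,\beta}\frac{\partial^2 q_1}{\partial\xi_\alpha\partial\xi_\beta}\frac{\partial^2 q_1}{\partial x_\alpha\partial x_\beta}$ contribution appearing in $E_0$), and then invoke the uniform structure of \eqref{4.1} for the inductive step.

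The main obstacle I anticipate is bookkeeping rather than conceptual: one must track the noncommutative composition formula carefully, keep the $(-i)^{|J|}$ factors and the $\frac{1}{J!}$ weights correct, and confirm that at each degree no product other than $q_1 q_{-m-1}+q_{-m-1}q_1$ contributes the newest unknown. The scalar nature of $q_1$ is exactly what makes this isolation clean and the recursion well posed; were $q_1$ a genuine matrix one would instead have to invert the Sylvester-type map $X\mapsto q_1 X + X q_1$ at every step, so I would emphasize that \eqref{3.9} is precisely the feature that avoids this difficulty and makes each $q_{-m-1}$ uniquely solvable.
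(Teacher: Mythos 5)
Your proposal is correct and follows essentially the same route as the paper: expand the factorization to obtain the Riccati-type operator identity $Q^2-BQ-\frac{\partial Q}{\partial x_n}+C=0$ modulo smoothing, pass to the full symbol equation via the composition formula, read off $q_1=|\xi^{\prime}|I_{n+1}$ from the degree-two terms, and then solve the degree-$(-m)$ equation $q_1q_{-m-1}+q_{-m-1}q_1=E_{-m}$ recursively, using that $q_1$ is scalar so the Sylvester map reduces to multiplication by $2|\xi^{\prime}|$. Your additional remarks on why the scalar nature of $q_1$ makes the isolation of the newest unknown clean are accurate and consistent with the paper's argument.
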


\begin{proof}
    It follows from \eqref{3.07} that
    \begin{align*}
        I_{n+1}\frac{\partial^2 }{\partial x_n^2} + B \frac{\partial }{\partial x_n} + C 
        = \Bigl(I_{n+1}\frac{\partial }{\partial x_n} + B - Q\Bigr)\Bigl(I_{n+1}\frac{\partial }{\partial x_n} + Q\Bigr)
    \end{align*}
    modulo a smoothing operator. Equivalently,
    \begin{align}\label{3.7}
        Q^2 - BQ - \Big[I_{n+1}\frac{\partial }{\partial x_n},Q\Big] + C = 0
    \end{align}
    modulo a smoothing operator, where the commutator $\big[I_{n+1}\frac{\partial }{\partial x_n},Q\big]$ is defined by, for any $v\in C^{\infty}(M)$,
    \begin{align*}
        \Big[I_{n+1}\frac{\partial }{\partial x_n},Q\Big]v
        &:= I_{n+1}\frac{\partial }{\partial x_n}(Qv) - Q \Bigl(I_{n+1}\frac{\partial }{\partial x_n}\Bigr)v \\
        &= \frac{\partial Q}{\partial x_n}v.
    \end{align*}
    Recall that if $G_1$ and $G_2$ are two pseudodifferential operators with full symbols $g_1=g_1(x,\xi)$ and $g_1=g_2(x,\xi)$, respectively, then the full symbol $\sigma(G_1G_2)$ of the operator $G_1G_2$ is given by (see \cite[p.\,11]{Taylor11.2}, \cite[p.\,71]{Hormander85.3}, and also \cite{Grubb86,Treves80})
    \begin{align*}
        \sigma(G_1G_2)\sim \sum_{J} \frac{(-i)^{|J|}}{J !} \partial_{\xi}^{J}g_1 \, \partial_{x}^{J}g_2,
    \end{align*}
    where the sum is over all multi-indices $J$. Let $q = q(x,\xi^{\prime})$ be the full symbol of the operator $Q(x,\partial_{x^\prime})$, we write $ q(x,\xi^{\prime}) \sim \sum_{j\leqslant 1} q_j(x,\xi^{\prime})$ with $q_j(x,\xi^{\prime})$ homogeneous of degree $j$ in $\xi^{\prime}$. Hence, we get the following full symbol equation of \eqref{3.7}
    \begin{equation}\label{3.8}
        \sum_{J} \frac{(-i)^{|J|}}{J !} \partial_{\xi^{\prime}}^{J}q \, \partial_{x^\prime}^{J}q - \sum_{J} \frac{(-i)^{|J|}}{J !} \partial_{\xi^{\prime}}^{J}b \, \partial_{x^\prime}^{J}q - \frac{\partial q}{\partial x_n} + c = 0,
    \end{equation}
    where the sum is over all multi-indices $J$.

    \vspace{2mm}

    We shall determine $q_j=q_j(x,\xi^{\prime})\,(j\leqslant 1)$ so that \eqref{3.8} holds modulo $S^{-\infty}$. Grouping the homogeneous terms of degree two in \eqref{3.8}, we have
    \begin{align}\label{5.1}
        q_1^2+c_2=0.
    \end{align}
    Since we have chosen the unit outer normal vector $\nu$ on the boundary, by combining the above equation and \eqref{5.11}, we take
    \begin{align}\label{5.15}
        q_1=|\xi^{\prime}|I_{n+1},
    \end{align}
    which implies that $q_1$ is positive definite.

    Grouping the homogeneous terms of degree $-m\,(m\geqslant -1)$ in \eqref{3.8}, we get
    \begin{align}\label{4.2}
        q_1q_{-m-1}+q_{-m-1}q_1=E_{-m},
    \end{align}
    where $E_{-m}\,(m\geqslant -1)$ are given by \eqref{2.11}--\eqref{4.1}. By \eqref{5.15} and \eqref{4.2} we immediately get
\begin{align*}
    q_{-m-1}(x,\xi^{\prime})&=\frac{1}{2|\xi^{\prime}|}E_{-m}.
\end{align*}
\end{proof}

In boundary normal coordinates, the Dirichlet-to-Neumann map $\tilde{\Lambda}_{g}$ can be represented as the pseudodifferential operator $Q$ modulo a smoothing operator (see the following Proposition \ref{prop3.3}).
\begin{proposition}\label{prop3.3}
    In boundary normal coordinates, the Dirichlet-to-Neumann map $\tilde{\Lambda}_{g}$ can be represented as
    \begin{align}\label{3.10}
        \tilde{\Lambda}_{g}\bm{U} = Q\bm{U}|_{\partial M}
    \end{align}
    modulo a smoothing operator.
\end{proposition}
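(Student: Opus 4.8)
The plan is to read off the Dirichlet-to-Neumann map directly from the operator factorization established in Proposition \ref{prop3.1}. Since the matrix $A$ in \eqref{3.2} is invertible (it is diagonal with positive entries $\mu^{1/2}$ and $\mu^{-1}$), the Dirichlet problem \eqref{b9} is equivalent to $A^{-1}L_g\bm{U}=0$, and Proposition \ref{prop3.1} allows us to write
\[
    A^{-1}L_g = \Bigl(I_{n+1}\frac{\partial}{\partial x_n} + B - Q\Bigr)\Bigl(I_{n+1}\frac{\partial}{\partial x_n} + Q\Bigr)
\]
modulo a smoothing operator. First I would set $\bm{R} := \bigl(I_{n+1}\frac{\partial}{\partial x_n} + Q\bigr)\bm{U}$. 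Applying the left factor to $\bm{R}$ and using $L_g\bm{U}=0$ gives
\[
    \Bigl(I_{n+1}\frac{\partial}{\partial x_n} + B - Q\Bigr)\bm{R} = A^{-1}L_g\bm{U} \equiv 0
\]
modulo a smoothing operator, so that $\bm{R}$ satisfies the first-order evolution equation $\frac{\partial}{\partial x_n}\bm{R} = (Q - B)\bm{R}$ modulo $C^\infty$.

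The hard part will be to show that this forces $\bm{R}$ to be smooth up to $\partial M$, thereby selecting the decaying factor $\frac{\partial}{\partial x_n}+Q$ and discarding the growing factor up to a smoothing error. Here the positivity of $q_1$ is essential: since $Q$ has principal symbol $q_1 = |\xi^{\prime}|I_{n+1}$, which is positive definite, and $B$ is of order zero, the operator $Q - B$ is elliptic of order one with positive-definite principal symbol $|\xi^{\prime}|I_{n+1}$. Hence the evolution $\frac{\partial}{\partial x_n}\bm{R}=(Q-B)\bm{R}$ is of exponentially growing type as $x_n$ increases into the interior. On the other hand, $\bm{U}$ solves the elliptic system $L_g\bm{U}=0$ with smooth coefficients, whose principal symbol is $-|\xi|^2 I_{n+1}$, so interior elliptic regularity yields $\bm{U}\in[C^\infty(\mathrm{int}\,M)]^{n+1}$ and therefore $\bm{R}$ is smooth in the interior. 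Combining the interior smoothness with the sign of the principal symbol — propagating any boundary singularity of $\bm{R}$ inward would contradict this regularity — forces the wavefront set of $\bm{R}$ to be empty near $\partial M$, i.e.\ $\bm{R}\in[C^\infty(\overline{M})]^{n+1}$ up to the boundary. This is precisely the parametrix/decaying-solution selection argument of Lee--Uhlmann \cite{LeeUhlm89}, adapted to the present system; the only new point to check is that, because $q_1$ is a scalar multiple of the identity, the decoupling into growing and decaying modes proceeds exactly as in the scalar case.

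Granting that $\bm{R}$ is smooth up to $\partial M$, restriction to the boundary gives $\bigl(\frac{\partial}{\partial x_n}\bm{U} + Q\bm{U}\bigr)\big|_{\partial M}\equiv 0$ modulo a smoothing operator acting on the boundary data $\bm{V}=\bm{U}|_{\partial M}$, whence
\[
    \frac{\partial \bm{U}}{\partial x_n}\Big|_{\partial M} = -Q\bm{U}|_{\partial M}
\]
modulo a smoothing operator. Finally I would translate this into the normal derivative: in boundary normal coordinates $x_n$ is the geodesic distance to $\partial M$ increasing into $M$, so the unit outer normal is $\nu = -\frac{\partial}{\partial x_n}$ and $\frac{\partial \bm{U}}{\partial\nu} = -\frac{\partial \bm{U}}{\partial x_n}$ on $\partial M$. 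Therefore
\[
    \tilde{\Lambda}_g\bm{U} = \frac{\partial \bm{U}}{\partial\nu}\Big|_{\partial M} = -\frac{\partial \bm{U}}{\partial x_n}\Big|_{\partial M} = Q\bm{U}|_{\partial M}
\]
modulo a smoothing operator, which is exactly \eqref{3.10}. The positivity of $q_1$ also matches the fact, noted in the introduction, that $\tilde{\Lambda}_g$ (like $\Lambda_g$) is an elliptic first-order operator with positive principal symbol.
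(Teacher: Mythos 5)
Your proposal is correct and follows essentially the same route as the paper: factor $A^{-1}L_g$ via Proposition \ref{prop3.1}, set $\bm{R}=(I_{n+1}\partial_{x_n}+Q)\bm{U}$ (the paper's $\bm{W}$), use interior elliptic regularity plus the positive definiteness of $q_1$ to conclude $\bm{R}$ is smooth up to $\partial M$, and convert $\nu=-\partial/\partial x_n$ to obtain \eqref{3.10}. The only cosmetic difference is that you phrase the key regularity step as a wavefront-set/mode-selection argument \`a la Lee--Uhlmann, whereas the paper makes it concrete by substituting $t=T-x_n$ and invoking the smoothing property of the resulting generalized heat semigroup.
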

\begin{proof}
    We use the boundary normal coordinates $(x^\prime,x_n)$ with $x_n\in[0,T]$. Since the principal symbol of the operator $L_{g}$ is negative definite, the hyperplane ${x_n = 0}$ is non-characteristic. Hence, $L_{g}$ is partially hypoelliptic with respect to this boundary (see \cite[p.\,107]{Hormander64}). Therefore, the solution to the equation $L_{g} \bm{U} = 0$ is smooth in normal variable, that is, $\bm{U}\in [C^\infty([0,T];\mathfrak{D}^\prime (\mathbb{R}^{n-1}))]^{n+1}$ locally. From Proposition \ref{prop3.1}, we see that \eqref{b9} is locally equivalent to the following system of equations for $\bm{U},\bm{W}\in [C^\infty([0,T];\mathfrak{D}^\prime (\mathbb{R}^{n-1}))]^{n+1}$:
    \begin{align*}
        \Big(I_{n+1}\frac{\partial }{\partial x_n} + Q\Big)\bm{U}  &= \bm{W}, \quad \bm{U}|_{x_n=0}=\bm{V}, \\
        \Big(I_{n+1}\frac{\partial }{\partial x_n} + B - Q\Big)\bm{W}  &=\bm{Y} \in [C^\infty([0,T]\times \mathbb{R}^{n-1})]^{n+1}.
    \end{align*}
    Inspired by \cite{LeeUhlm89,Liu19,TanLiu23,Tan23}, if we substitute $t = T-x_n$ into the second equation above, then we get a backwards generalized heat equation
    \begin{align*}
        \frac{\partial \bm{W}}{\partial t} -(B-Q)\bm{W}=-\bm{Y}.
    \end{align*}
    Since $\bm{U}$ is smooth in the interior of the manifold $M$ by interior regularity for elliptic operator $L_{g}$, it follows that $\bm{W}$ is also smooth in the interior of $M$, and so $\bm{W}|_{x_n=T}$ is smooth. In view of that $q_1$ (the principal symbol of $Q$) is positive definite (see \eqref{3.9}), we get that the solution operator for this heat equation is smooth for $t > 0$ (see \cite[p.\,134]{Treves80}). Therefore,
    \begin{align*}
        \frac{\partial \bm{U}}{\partial x_n} + Q\bm{U}=\bm{W} \in [C^\infty([0,T]\times \mathbb{R}^{n-1})]^{n+1}
    \end{align*}
    locally. If we set $\mathcal{R} \bm{V} = \bm{W}|_{\partial M}$, this shows that $\mathcal{R}$ is a smoothing operator and
    \begin{align}\label{5.8}
        \frac{\partial \bm{U}}{\partial x_n}\bigg|_{\partial M}=-Q\bm{U}|_{\partial M}+\mathcal{R}\bm{V}.
    \end{align}
\end{proof}

\vspace{5mm}

\section{Determining the metric on the boundary}\label{s4}

\vspace{5mm}

\begin{proof}[Proof of Theorem {\rm \ref{thm1.1}}]
    Since the Cauchy data $\tilde{C}_g$ is equivalent to the Cauchy data $C_g$, it suffices to show that the Dirichlet-to-Neumann map $\tilde{\Lambda}_{g}$ (or the pseudodifferential operator $Q$) uniquely determines the on the metric boundary by Proposition \ref{prop3.3}.

It follows from \eqref{3.9} that
\begin{align*}
    q_1(x,\xi^{\prime})=|\xi^{\prime}|I_{n+1}=\sqrt{g^{\alpha\beta}\xi_\alpha\xi_\beta}I_{n+1}.
\end{align*}
This shows that $q_1$ uniquely determines $g^{\alpha\beta}|_{\partial M}$ for all $1\leqslant\alpha,\beta\leqslant n-1$. Clearly, the tangential derivatives $\frac{\partial g^{\alpha\beta}}{\partial x_\gamma}\big|_{\partial M}$ can also be uniquely determined by $q_1$ for all $1\leqslant \alpha,\beta,\gamma \leqslant n-1$.

For $k\geqslant 0$, we denote by $T_{-k}$ the terms that only involve the boundary values of $g_{\alpha\beta}$, $g^{\alpha\beta}$, and their normal derivatives of order ar most $k$. Note that $T_{-k}$ may be different in different expressions. From \eqref{5.10}, \eqref{5.12}, \eqref{2.11}, and \eqref{3.9}, we know that
\begin{align*}
    E_1&=bq_1+\frac{\partial q_1}{\partial x_n}+T_0,\\
    \operatorname{tr}E_1&=(n+3)\Gamma^\alpha_{\alpha n}|\xi^{\prime}|+(n+1)\frac{\partial |\xi^{\prime}|}{\partial x_n}+T_0.
\end{align*}
By \eqref{3.1.1}, we get
\begin{align}\label{b1}
    \operatorname{tr} q_0 =\frac{1}{2}\Bigl((n+3)\Gamma^\alpha_{\alpha n}+(n+1)\frac{1}{|\xi^{\prime}|}\frac{\partial |\xi^{\prime}|}{\partial x_n}\Bigr)+T_0.
\end{align}
In boundary normal coordinates, we have
\begin{align*}
    \Gamma^{\alpha}_{n\alpha}=\frac{1}{2}g^{\alpha\beta}\frac{\partial g_{\alpha\beta}}{\partial x_n}=-\frac{1}{2}g_{\alpha\beta}\frac{\partial g^{\alpha\beta}}{\partial x_n}.
\end{align*}
Substituting this into \eqref{b1}, we get
\begin{align}
    \label{6.8} \operatorname{tr} q_0
    &=-\frac{1}{4}\Bigl((n+3)g_{\alpha\beta}\frac{\partial g^{\alpha\beta}}{\partial x_n} - (n+1)\frac{1}{|\xi^{\prime}|^2}\frac{\partial |\xi^{\prime}|^2}{\partial x_n}\Bigr)+T_0 \notag\\
    &=-\frac{1}{4|\xi^{\prime}|^2}k_1^{\alpha\beta}\xi_\alpha\xi_\beta+T_0,
\end{align}
where
\begin{align}
    \label{6.9} k_1^{\alpha\beta}&=(n+3)h_1g^{\alpha\beta} - (n+1)\frac{\partial g^{\alpha\beta}}{\partial x_n},\\
    \label{6.10} h_1&=g_{\alpha\beta}\frac{\partial g^{\alpha\beta}}{\partial x_n}.
\end{align}
Evaluating $\operatorname{tr} q_0$ on all unit vectors $\xi^{\prime}$ shows that $q_0$ and $g^{\alpha\beta}|_{\partial M}$ completely determine $k_1^{\alpha\beta}$. By \eqref{6.9} and \eqref{6.10}, we have 
\begin{align*}
    k_1^{\alpha\beta}g_{\alpha\beta}=(n^2+n-4)h_1.
\end{align*}
For $n\geqslant 2$, we have $n^2+n-4>0$. Hence,
\begin{align*}
    h_1=\frac{k_1^{\alpha\beta}g_{\alpha\beta}}{n^2+n-4}.
\end{align*}
By \eqref{6.9}, we get that
\begin{align*}
    \frac{\partial g^{\alpha\beta}}{\partial x_n}=\frac{(n+3)h_1g^{\alpha\beta}-k_1^{\alpha\beta}}{n+1},
\end{align*}
which implies that $q_0$ uniquely determines $\frac{\partial g^{\alpha\beta}}{\partial x_n}\big|_{\partial M}$.

It follows from \eqref{3.05} that
\begin{align*}
    E_0=\frac{\partial q_0}{\partial x_n} - c_0+T_{-1}.
\end{align*}
By \eqref{b1} and \eqref{5.13}, we obtain
\begin{align}\label{b7}
    \operatorname{tr}E_0
    =\frac{1}{2}\Bigl((n+3)\frac{\partial \Gamma^\alpha_{\alpha n}}{\partial x_n}+(n+1)\frac{1}{|\xi^{\prime}|}\frac{\partial^2 |\xi^{\prime}|}{\partial x_n^2}\Bigr)-g^{ml}\frac{\partial \Gamma^j_{ml}}{\partial x_j}  +T_{-1}.
\end{align}

Note that
\begin{align}\label{b4}
    \frac{1}{|\xi^{\prime}|}\frac{\partial^2 |\xi^{\prime}|}{\partial x_n^2}
    =\frac{1}{2|\xi^{\prime}|^2}\frac{\partial^2 |\xi^{\prime}|^2}{\partial x_n^2}+T_{-1}.
\end{align}
In view of that
\begin{align*}
    \frac{\partial^2 (g_{\alpha\beta}g^{\alpha\beta})}{\partial x_n^2}=\frac{\partial^2 (n-1)}{\partial x_n^2}=0,
\end{align*}
we get
\begin{align*}
    g_{\alpha\beta}\frac{\partial^2 g^{\alpha\beta}}{\partial x_n^2} = - g^{\alpha\beta}\frac{\partial^2 g_{\alpha\beta}}{\partial x_n^2}+T_{-1}.
\end{align*}
Then, in boundary normal coordinates, we compute that
\begin{align}
    \frac{\partial \Gamma^{\alpha}_{n\alpha}}{\partial x_n}&=-\frac{1}{2}g_{\alpha\beta}\frac{\partial^2 g^{\alpha\beta}}{\partial x_n^2}+T_{-1},\label{b5}\\
    g^{ml}\frac{\partial \Gamma^j_{ml}}{\partial x_j}&=\frac{1}{2}g_{\alpha\beta}\frac{\partial^2 g^{\alpha\beta}}{\partial x_n^2}+T_{-1}.\label{b6}
\end{align}
Combining \eqref{3.1.1}, \eqref{b7}, \eqref{b5}, and \eqref{b6}, we have 
\begin{align}
    \label{6.12} \operatorname{tr}q_{-1}
    &=-\frac{n+5}{8|\xi^{\prime}|}g_{\alpha\beta}\frac{\partial^2 g^{\alpha\beta}}{\partial x_n^2}+\frac{n+1}{8|\xi^{\prime}|^3}\frac{\partial^2 |\xi^{\prime}|^2}{\partial x_n^2}+T_{-1} \notag\\
    &=-\frac{1}{8|\xi^{\prime}|^3}k_2^{\alpha\beta}\xi_\alpha\xi_\beta+T_{-1},
\end{align}
where
\begin{align}
    \label{6.13} k_2^{\alpha\beta}&=(n+5)h_2g^{\alpha\beta}-(n+1)\frac{\partial^2 g^{\alpha\beta}}{\partial x_n^2},\\
    \label{6.14} h_2&=g_{\alpha\beta}\frac{\partial^2 g^{\alpha\beta}}{\partial x_n^2}.
\end{align}
By the same argument, it follows from \eqref{6.13} and \eqref{6.14} that
\begin{align*}
    k_2^{\alpha\beta}g_{\alpha\beta}=(n^2+3n-6)h_2.
\end{align*}
For $n\geqslant 2$, we have $n^2+3n-6>0$. Hence,
\begin{align*}
    h_2=\frac{k_2^{\alpha\beta}g_{\alpha\beta}}{n^2+3n-6}.
\end{align*}
By \eqref{6.13}, we get that
\begin{align*}
    \frac{\partial^2 g^{\alpha\beta}}{\partial x_n^2}=\frac{(n+5)h_2g^{\alpha\beta}-k_2^{\alpha\beta}}{n+1},
\end{align*}
which implies that $q_{-1}$ uniquely determines $\frac{\partial^2 g^{\alpha\beta}}{\partial x_n^2}\big|_{\partial M}$.

Now we consider $q_{-m-1}$ for $m\geqslant 1$. From \eqref{4.1}, we see that
\begin{align}
    \label{6.18} E_{-m}=\frac{\partial q_{-m}}{\partial x_n}+T_{-m-1}.
\end{align}
We end this proof by induction. Suppose we have shown that
\begin{align}
    \label{6.15} \operatorname{tr}E_{-j}
    &= -\frac{1}{(2|\xi^{\prime}|)^{j+2}}k_{j+2}^{\alpha\beta}\xi_\alpha\xi_\beta+T_{-j-1}
\end{align}
for $1\leqslant j \leqslant m$, where
\begin{align}
    \label{6.16} k_{j+2}^{\alpha\beta}&=(n+5)h_{j+2}g^{\alpha\beta}-(n+1)\frac{\partial^{j+2} g^{\alpha\beta}}{\partial x_n^{j+2}},\\
    \label{6.17} h_{j+2}&=g_{\alpha\beta}\frac{\partial^{j+2} g^{\alpha\beta}}{\partial x_n^{j+2}}.
\end{align}
This means that $q_{-j-1}$ uniquely determines $\frac{\partial^{j+2} g^{\alpha\beta}}{\partial x_n^{j+2}}\big|_{\partial M}$ for $1\leqslant j \leqslant m$.

Since we have $q_{-(m+1)-1}$ uniquely determines $E_{-(m+1)}$. From \eqref{6.18}, we have
\begin{align*}
    E_{-(m+1)}=\frac{\partial q_{-(m+1)}}{\partial x_n}+T_{-(m+1)-1}.
\end{align*}
By the above equality and the fact that $q_{-(m+1)}$ uniquely determines $E_{-m}$, we have $E_{-(m+1)}$ uniquely determines $\frac{\partial E_{-m}}{\partial x_n}$. By the assumption \eqref{6.15}, we get 
\begin{align*}
    \operatorname{tr} E_{-m-1}
    &=\frac{1}{2|\xi^{\prime}|}\frac{\partial (\operatorname{tr} E_{-m})}{\partial x_n}+T_{-m-2}\\
    &= -\frac{1}{(2|\xi^{\prime}|)^{m+3}}k_{m+3}^{\alpha\beta}\xi_\alpha\xi_\beta+T_{-m-2},
\end{align*}
where
\begin{align*}
    k_{m+3}^{\alpha\beta}&=(n+5)h_{m+3}g^{\alpha\beta}-(n+1)\frac{\partial^{m+3} g^{\alpha\beta}}{\partial x_n^{m+3}},\\
    h_{m+3}&=g_{\alpha\beta}\frac{\partial^{m+3} g^{\alpha\beta}}{\partial x_n^{m+3}}.
\end{align*}
By the same argument, we see that $q_{-(m+1)-1}$ uniquely determines $\frac{\partial^{m+3} g^{\alpha\beta}}{\partial x_n^{m+3}}\big|_{\partial M}$. Therefore, we conclude that the Dirichlet-to-Neumann map $\tilde{\Lambda}_{g}$ uniquely determines the partial derivatives of all orders of the Riemannian metric $\frac{\partial^{|J|} g^{\alpha\beta}}{\partial x^J}$ on the boundary $\partial M$ for all multi-indices $J$.

\end{proof}

\vspace{5mm}

\section*{Acknowledgements}

\vspace{5mm}

This work was supported by National Key R\&D Program of China 2020YFA0712800.

\vspace{5mm}

\vspace{5mm}

\end{document}